\documentclass{amsart}
\usepackage{amsmath,amssymb,amscd,latexsym}

\usepackage[all]{xy}

\usepackage{graphicx}

\usepackage{enumitem}

\usepackage[usenames, dvipsnames]{color}


\newcommand{\bi}{\begin{itemize}}
\newcommand{\ei}{\end{itemize}}
\newcommand{\bn}{\begin{enumerate}}
\newcommand{\en}{\end{enumerate}}

\newcommand{\bq}{\begin{equation}}
\newcommand{\eq}{\end{equation}}

\newcommand{\ba}{\begin{align}}
\newcommand{\ea}{\end{align}}

\newcommand{\bas}{\begin{align*}}
\newcommand{\eas}{\end{align*}}

\newcommand{\bs}{\begin{split}}
\newcommand{\es}{\end{split}}

\newcommand{\C}{{\mathbb{C}}}

\newcommand{\F}{{\mathbb{F}}}

\newcommand{\Pro}{{\mathbb{P}}}

\newcommand{\Z}{{\mathbb{Z}}}
\newcommand{\cl}{\mathrm{cl}}

\newcommand{\et}{\mathrm{\acute{e}t}}

\newcommand{\Hdg}{\mathrm{Hdg}}

\newcommand{\Mh}{{\mathcal M}}

\newtheorem{theorem}{Theorem}[section]
\newtheorem{lemma}[theorem]{Lemma}
\newtheorem{prop}[theorem]{Proposition}

\newtheorem*{theorem*}{Theorem}

\theoremstyle{definition}

\newtheorem{remark}[theorem]{Remark}




\begin{document}

\author{Gereon Quick}

\address{Department of Mathematical Sciences, NTNU, NO-7491 Trondheim, Norway}
\email{gereon.quick@math.ntnu.no}

\title{Examples of non-algebraic classes in the Brown-Peterson tower}

\date{}

\begin{abstract}
For every $n\ge 0$, we construct classes in the Brown-Peterson cohomology $BP\langle n \rangle$ of smooth projective complex algebraic varieties which are not in the image of the cycle map from the corresponding motivic Brown-Peterson cohomology. This generalizes the examples of Atiyah and Hirzebruch to all finite levels in the Brown-Peterson tower. 
\end{abstract}

\maketitle

\section{Introduction}

Let $X$ be a smooth projective complex algebraic variety. Let 
\[
\cl \colon CH^*(X) \to H^{2*}(X; \Z)
\]
be the cycle map from Chow groups to the singular cohomology of the space $X(\C)$ of complex points of $X$. Recall that classes in the image of $\cl$ are called {\it algebraic} and that all algebraic classes are contained in the subgroup of integral Hodge classes. However, it is well known that, in general, not all integral Hodge classes are algebraic. There are basically two types of examples of non-algebraic Hodge classes, one which can be detected by topological methods and one which cannot be detected by topological invariants. 
In \cite{ah}, Atiyah and Hirzebruch used the Atiyah-Hirzebruch spectral sequence to construct an obstruction for elements being in the image of $\cl$ and provided, for every prime $p$, examples of non-algebraic $p$-torsion classes using the Godeaux-Serre construction of varieties associated to finite groups. In \cite{totaro}, Totaro showed that the Atiyah-Hirzebruch obstruction can also be explained by the fact that $\cl$ factors through the natural map 
$\bar{\vartheta} \colon MU^{2*}(X)\otimes_{MU^*}\Z \to H^{2*}(X; \Z)$,  
where $MU^*(X)$ denotes the complex cobordism of the space $X(\C)$. A crucial fact is that the map $\bar{\vartheta}$ is neither surjective nor injective in general. 
In \cite{kollar}, Koll\'ar provided examples of varieties $X$ and non-torsion classes $\alpha \in H^4(X; \Z)$ which are not algebraic while a multiple of $\alpha$ is algebraic. In \cite{sv}, Soul\'e and Voisin explain these examples in detail and show that there is no locally constant invariant that can detect these examples. In particular, for those varieties, $\bar{\vartheta}$ is surjective. Moreover, Soul\'e and Voisin construct other types of non-algebraic torsion classes which cannot be explained by the obstruction of Atiyah-Hirzebruch and Totaro. We briefly discuss some more examples in Remark \ref{remtate}. 

One way to define the cycle map for smooth complex varieties is to interpret it as the natural map 
\begin{align*}
\cl \colon H_{\Mh}^{2*,*}(X; \Z) \to H^{2*}(X; \Z) 
\end{align*}
from motivic to singular cohomology induced by topological realization. The advantage of this definition for our purposes is that it immediately generalizes to other motivic cohomology theories. It is then a natural question what we can say about the image of the corresponding cycle map for other motivic theories.

The purpose of this paper is to show that the examples of Atiyah and Hirzebruch can be generalized to produce non-algebraic classes in all levels of the tower of Brown-Peterson cohomology theories which interpolate between singular cohomology and the $p$-localization of complex cobordism. 

To be more precise, for a prime $p$, let $BP^*(-)$ denote the Brown-Peterson cohomology for $p$ (see \cite{bp} and \cite{quillenfgl}). 
For an integer $n \ge 0$, let $BP\langle n \rangle$ be the associated intermediate theory studied in \cite{wilson}. 
These cohomology theories fit into a sequence
\begin{align*}
BP^*(X) \to \cdots \to BP \langle n \rangle^*(X) \to \cdots \to BP \langle 0 \rangle^*(X) = H^*(X;\Z_{(p)}).
\end{align*}

For every $p$ and $n$, there is a corresponding motivic Brown-Peterson cohomology which we denote by $BP\langle n \rangle_{\Mh}^{*,*}(X)$ (see \cite[\S 6.4]{hoyois} and \cite[\S 3]{ormsby}). For every $i$ and $j$, the topological realization functor from  the motivic to the classical stable homotopy category induces a natural homomorphism 
\begin{align*}
BP\langle n \rangle_{\Mh}^{i,j}(X) \to BP\langle n \rangle^{i}(X) 
\end{align*}
to the Brown-Peterson cohomology of the space $X(\C)$. 

For given $p$ and $n$, we write $w(n) := p^n + p^{n-1} + \cdots + p + 1$.
Our main result is the following. 

\begin{theorem*}\label{mainintro}
For every prime $p$ and every integer $n \ge 0$, there is a smooth projective complex algebraic variety $X$ and an element $b_n \in BP\langle n \rangle^{2w(n)+2}(X)$ which is not in the image of the map 
\begin{align*}
\cl_n \colon BP\langle n \rangle_{\Mh}^{2w(n)+2, w(n)+1}(X) \to BP\langle n \rangle^{2w(n)+2}(X).
\end{align*}
\end{theorem*}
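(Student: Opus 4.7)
My plan is to extend the Atiyah-Hirzebruch strategy to the Brown-Peterson tower by constructing $b_n$ as the lift to $BP\langle n \rangle$-cohomology of an explicit class built from Milnor operations, and then using the motivic Milnor operation $Q_{n+1}^{\mot}$ as the obstruction. First I fix $G = (\Z/p)^{n+3}$ and apply the Godeaux-Serre / Ekedahl-Totaro construction to produce a smooth projective complex variety $X$ whose integral cohomology agrees with $H^{*}(BG; \Z)$ through a sufficiently large range of degrees. Let $y_1, \ldots, y_{n+3} \in H^{1}(BG; \F_p)$ be the standard generators. I consider
\[
\bar{b}_n := Q_0 Q_1 \cdots Q_n(y_1 y_2 \cdots y_{n+3}) \in H^{2w(n)+2}(X; \F_p),
\]
where $Q_j$ denotes the Milnor operation of degree $2p^j - 1$; a direct count gives $|\bar{b}_n| = (n+3) + \sum_{j=0}^{n}(2p^j - 1) = 2w(n) + 2$.

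Since the Milnor operations satisfy $Q_j^2 = 0$ and $Q_j Q_l + Q_l Q_j = 0$, moving $Q_j$ past the composition for each $j \in \{0, 1, \ldots, n\}$ produces a $Q_j^2$ factor, so $Q_j(\bar{b}_n) = 0$. The same manipulation gives $Q_{n+1}(\bar{b}_n) = \pm Q_0 Q_1 \cdots Q_n Q_{n+1}(y_1 \cdots y_{n+3})$; expanding via the derivation rule and the formulas $Q_j(y_i) = x_i^{p^j}$ for $p$ odd (and the analogous $Q_j(y_i) = y_i^{2^{j+1}}$ for $p = 2$), the result is a non-trivial Vandermonde-type sum of distinct monomials in the polynomial part of $H^{*}(BG; \F_p)$, hence non-zero in $H^{2w(n) + 2p^{n+1} + 1}(X; \F_p)$. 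Since $Q_0(\bar{b}_n) = 0$, the Bockstein exact sequence provides an integral lift $\tilde{b}_n \in H^{2w(n)+2}(X; \Z_{(p)})$. I then invoke the known structure of the Atiyah-Hirzebruch spectral sequence for $BP\langle n \rangle^{*}$ of classifying spaces of elementary abelian $p$-groups: the only relevant differentials on integral classes are $d_{2p^j - 1} = v_j Q_j$ for $j = 1, \ldots, n$, all of which vanish on $\tilde{b}_n$ by the previous computation. Hence $\tilde{b}_n$ lifts to the desired class $b_n \in BP\langle n \rangle^{2w(n)+2}(X)$.

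For the obstruction on the motivic side, the operation $Q_{n+1}^{\mot}$ has bidegree $(2p^{n+1} - 1, p^{n+1} - 1)$, so for any $z \in BP\langle n \rangle_{\Mh}^{2w(n)+2, w(n)+1}(X; \F_p)$ its image $Q_{n+1}^{\mot}(z)$ lives in bidegree $(2w(n)+2p^{n+1}+1, w(n)+p^{n+1})$, whose first coordinate exceeds twice the second by one. Motivic cohomology vanishes above the slope-$2$ diagonal ($H_{\Mh}^{a,b}(X; \F_p) = 0$ for $a > 2b$), and this vanishing is inherited by $BP\langle n \rangle_{\Mh}$ via the motivic slice / Atiyah-Hirzebruch spectral sequence (the coefficients $v_j$ all sit on the diagonal $a = 2b$), so $Q_{n+1}^{\mot}(z) = 0$. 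If $b_n$ were in the image of $\cl_n$, the naturality of Milnor operations under topological realization would force $Q_{n+1}^{\topp}(\bar{b}_n) = 0$, contradicting the non-vanishing established above.

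The main obstacles I anticipate are the two spectral sequence statements: first, that the higher Atiyah-Hirzebruch differentials for $BP\langle n \rangle^{*}(X)$ beyond the $v_j Q_j$ for $j \leq n$ vanish on our integral class (which requires the explicit structure of $BP\langle n \rangle^{*}(BG)$ for elementary abelian $G$ and care with the Godeaux-Serre approximation); and second, to rigorously justify the motivic vanishing $BP\langle n \rangle_{\Mh}^{a,b}(X; \F_p) = 0$ for $a > 2b$ together with the compatibility $Q_{n+1}^{\mot} \mapsto Q_{n+1}^{\topp}$ under topological realization, which rests on the motivic Steenrod algebra and the motivic Atiyah-Hirzebruch/slice spectral sequence for $BP\langle n \rangle_{\Mh}$.
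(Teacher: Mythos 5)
Your strategy shares the same starting input as the paper (Godeaux--Serre variety for $(\Z/p)^{n+3}$, the class $Q_n\cdots Q_0(x_1\cdots x_{n+3})$, the degree count $2w(n)+2$), but it diverges in two substantive ways, and each divergence creates work that the paper sidesteps.

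\textbf{Construction of $b_n$.} The paper never touches the Atiyah--Hirzebruch spectral sequence. It builds $b_n$ directly from the connecting maps $q_i\colon BP\langle i-1\rangle\to\Sigma^{2p^i-1}BP\langle i\rangle$ in the $v_i$-cofiber sequences, setting $b_n:=\varphi^*q_n\cdots q_0(x_1\cdots x_{n+3})$; this visibly lives in $BP\langle n\rangle^{2w(n)+2}(X)$, reduces to $\pm Q_n\cdots Q_0(\cdots)\ne 0$ mod $p$, and (by exactness of the $v_{n+1}$-cofiber sequence plus $Q_{n+1}Q_n\cdots Q_0(\cdots)\ne 0$) cannot lift along $\rho^{n+1}_n$. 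You instead produce an integral class $\tilde b_n$ and propose lifting it through the AHSS, asserting that the only differentials acting on it are $v_jQ_j$ for $j\le n$. That assertion is not self-evident: the AHSS for $BP\langle n\rangle$ of a high-dimensional space admits higher differentials, and ruling them out requires the explicit structure of $BP\langle n\rangle^*(BG)$ (Powell, Strickland) together with an argument that the Godeaux--Serre truncation does not introduce new differentials in the relevant range. You flag this yourself as an obstacle, but it is a genuine gap; the paper's $q_i$-map construction is the cleaner route and I would recommend switching to it.

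\textbf{The obstruction.} The paper invokes Totaro's theorem that the mod-$p$ cycle map factors through $\rho_{-1}\colon BP^*(X)\to H^*(X;\F_p)$, so an element that cannot lift along $\rho^{n+1}_n$ (hence not along $\rho_n$) cannot be algebraic. You instead argue motivically: any motivic preimage $\bar z$ of $\bar b_n$ would satisfy $Q^{\mot}_{n+1}(\bar z)\in H^{2w(n)+2p^{n+1}+1,\,w(n)+p^{n+1}}_{\Mh}(X;\F_p)=0$ since the target bidegree lies strictly above the line $a=2b$, so realization would force $Q^{\topp}_{n+1}(\bar b_n)=0$, a contradiction. This is a sound alternative; it is essentially a motivic reformulation of the original Atiyah--Hirzebruch differential argument and does not need Totaro's factorization theorem as input (though it does need the realization $Q^{\mot}_i\mapsto Q^{\topp}_i$ over $\C$, which holds). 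However, your parenthetical appeal to vanishing of $BP\langle n\rangle^{a,b}_{\Mh}$ above the diagonal via the slice spectral sequence is both unjustified and unnecessary: the argument only ever needs $Q^{\mot}_{n+1}$ applied to $\rho^n_{-1,\Mh}(z)\in H^{*,*}_{\Mh}(X;\F_p)$, and the vanishing $H^{a,b}_{\Mh}(X;\F_p)=0$ for $a>2b$ of ordinary motivic cohomology of a smooth variety. Drop that sentence.

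In summary, your obstruction step is a valid and genuinely different route from the paper's (Totaro factorization vs.\ motivic bidegree vanishing), but your construction step has an unfilled gap and should be replaced by the direct $q_n\cdots q_0$ construction, which also simplifies the verification that $b_n$ has the right reduction in mod-$p$ cohomology.
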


The idea of the proof of the theorem and the structure of the paper can be summarized as follows. We start with the fundamental stable cofiber sequence
\begin{align*}
\Sigma^{2(p^n-1)}BP\langle n \rangle \to BP\langle n \rangle \to BP\langle n-1 \rangle. 
\end{align*}
It yields a well known obstruction for elements being in the image of the map $\rho^{n+1}_n \colon BP\langle n+1 \rangle^*(X) \to BP\langle n \rangle^*(X)$ provided by Milnor operations in mod $p$-cohomology $H^*(X; \F_p)$. However, it also provides a tool to lift elements from $H^*(X; \F_p)$ to $BP\langle n \rangle^{*+2w(n)-n-1}(X)$. 
We apply this observation to the elementary abelian $p$-group $G_{n+3}:=(\Z/p)^{n+3}$, for any prime number $p$, and construct explicit elements in $BP\langle n \rangle^*(BG_{n+3})$ which are not contained in the image of the map $\rho^{n+1}_n \colon BP\langle n+1 \rangle^*(BG_{n+3}) \to BP\langle n \rangle^*(BG_{n+3})$.  
Finally, we use these elements to prove the theorem for a Godeaux-Serre variety $X$ associated to $G_{n+3}$.  \\

Our argument relies on the fact that $\rho_n$ is not surjective in high degrees for the particular variety we consider. Wilson showed in \cite{wilson} that, for any finite complex $X$ and all $k\le 2w(n)$, the map $\rho_n \colon BP^k(X) \to BP\langle n \rangle^k(X)$ is surjective. The degree in which the examples of the theorem occur is hence minimal for our argument. 

In Remark \ref{remkollar}, we will briefly discuss a different type of argument for complex cobordism which the surjectivity of the map $MU^{2*}(X) \to H^{2*}(X;\Z)$ for Koll\'ar's examples.

{\bf Acknowledgements.} We are grateful to Claire Voisin for mentioning the argument of Remark \ref{remkollar} to us and for further very helpful comments. We would also like to thank Mike Hopkins for very helpful conversations.






\section{Obstructions and liftings}

Let $p$ be a prime number and $n \ge 0$ be an integer. 
Let $BP$ denote the spectrum representing Brown-Peterson cohomology for $p$ defined in \cite{bp}, and let $BP\langle n \rangle$ be the spectrum representing the associated intermediate theory for $p$ and $n$ studied by Wilson in \cite{wilson}. For $n=0$, one has $BP\langle 0 \rangle = H\Z_{(p)}$, the Eilenberg-MacLane spectrum for $\Z_{(p)}$, and,
for $n=-1$, we use the notation $BP\langle -1 \rangle := H\F_p$ for the mod $p$-Eilenberg-MacLane spectrum. For every $j > n \ge -1$, these theories are connected by canonical maps
\[
\rho_n \colon BP \to BP\langle n \rangle ~\text{and}~ \rho^j_n \colon BP\langle j \rangle \to BP\langle n \rangle.
\]
The coefficient rings are given by the polynomial algebras $BP^* \cong \Z_{(p)}[v_1, v_2, \ldots]$ and $BP\langle n \rangle^* \cong \Z_{(p)}[v_1, \ldots, v_n]$. The effect of the maps $\rho_n$ and $\rho^j_n$ is to send all $v_i$ with $i>n$ to $0$.

Recall that, for every $n\ge 1$, there is a stable cofiber sequence   
\begin{align*}
\Sigma^{2(p^n-1)}BP\langle n \rangle \xrightarrow{v_n} BP\langle n \rangle \xrightarrow{\rho^n_{n-1}} BP\langle n-1 \rangle \xrightarrow{q_n} \Sigma^{2(p^n-1)+1}BP\langle n \rangle.
\end{align*}

For every space $X$, this sequence induces a natural exact sequence
\begin{align*}
BP\langle n \rangle^{i+2(p^n-1)}(X) \xrightarrow{v_n} BP\langle n \rangle^i(X) \xrightarrow{\rho^n_{n-1}} BP\langle n-1 \rangle^i(X) \xrightarrow{q_n} BP\langle n \rangle^{i+2p^n-1}(X)
\end{align*}
where, by abuse of notation, we denote the induced maps on cohomology groups by the same symbols.

By \cite[Proposition 1.7]{wilson} (see also \cite[Lemma 2.4]{powell} and \cite[Proposition 4-4]{tamanoi00})), the map $q_n \colon BP\langle n-1 \rangle^i(X) \to BP\langle n \rangle^{i+2p^n-1}(X)$ corresponds, possibly up to a sign, to the $n$th Milnor operation $Q_n$ in mod $p$-cohomology in the sense that there is a commutative diagram
\begin{align}\label{1.4Qn}
\xymatrix{
BP\langle n \rangle^*(X) \ar[r]^-{\rho^{n}_{n-1}} \ar[dr]_-{\rho^{n}_{-1}} & BP\langle n-1 \rangle^i(X) \ar[d]_-{\rho^{n-1}_{-1}} \ar[r]^-{q_n} & BP\langle n \rangle^{i+2p^n-1}(X) \ar[d]^-{\rho^{n}_{-1}} \\
 & H^i(X; \F_p) \ar[r]_-{\pm Q_n} & H^{i+2p^n-1}(X; \F_p).}
\end{align}
Hence, since the top row of diagram \eqref{1.4Qn} is exact, $Q_n$ yields an obstruction to lifting an element from $BP\langle n-1 \rangle^*(X)$ to $BP\langle n \rangle^*(X)$ via $\rho^{n}_{n-1}$.

On the other hand, we can use the maps $q_n$ to produce explicit classes in $BP\langle n \rangle^*(X)$.  
Recall that the degree of $Q_i$ is $|Q_i| = 2p^i-1$ and hence 
\begin{align*}
\sum_{i=0}^{n}|Q_i| = \sum_{i=0}^{n}2p^i - 1 = 2w(n) - n - 1
\end{align*}
where we set $w(n):= p^{n}+\cdots +1$.
Successive composition of the maps $q_i$ yields a diagram which commutes (possibly up to sign) 
\begin{align*}
\scalebox{0.8}{
\xymatrix{
H\F_p \ar[r]^{q_0} \ar[d]_-{\pm Q_{n+1}\cdots Q_0} & \Sigma^{|Q_0|}BP\langle 0 \rangle \ar[r]^-{q_1} & \Sigma^{|Q_0|+|Q_1|}BP\langle 1 \rangle \ar[r]^-{q_1} & \cdots \ar[r]^-{q_{n}} & 
\Sigma^{2w(n)-n-1}BP\langle n \rangle \ar[d]^-{q_{n+1}} \\
\Sigma^{2w(n+1)-n-2}H\F_p & & & & \ar[llll]^-{\rho^{n+1}_{-1}} \Sigma^{2w(n+1)-n-2}BP\langle n+1 \rangle.}}
\end{align*}

Hence, for every $X$, there is a commutative diagram  
\begin{equation}\label{Qndiag}
\scalebox{0.85}{
\xymatrix{
BP\langle n+1 \rangle^{*+2w(n)-n-1}(X) \ar[r]^-{\rho^{n+1}_{n}} & BP\langle n \rangle^{*+2w(n)-n-1}(X) \ar[r]^-{q_{n+1}} & BP\langle n+1 \rangle^{*+2w(n+1)-n-2}(X) \ar[d]^{\rho^{n+1}_{-1}} \\
& H^*(X; \F_p) \ar[u]^{q_{n}\cdots q_0} \ar[r]_-{\pm Q_{n+1}\cdots Q_0} & H^{*+2w(n+1)-n-2}(X; \F_p)}}
\end{equation}
where the top row is exact. This yields the following criterion. 

\begin{lemma}\label{keylift}
If $x \in H^*(X; \F_p)$ satisfies $Q_{n+1}\cdots Q_0(x)\ne 0$, then 
\[
q_n \cdots q_0(x) \in BP\langle n \rangle^{*+2w(n)-n-1}(X)
\] 
is a non-trivial element which is not contained in the image of 
\begin{align*}
\rho^{n+1}_n \colon BP\langle n+1 \rangle^{*}(X) \to BP\langle n \rangle^{*}(X).
\end{align*}
\end{lemma}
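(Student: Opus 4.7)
The proof should be a direct diagram chase on \eqref{Qndiag}, with no additional input required beyond what has been set up. My plan is as follows.

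First, I would set $y := q_n \cdots q_0(x) \in BP\langle n \rangle^{*+2w(n)-n-1}(X)$ and chase $y$ through the square on the right of \eqref{Qndiag}. The commutativity (up to sign) of the total outer rectangle in that diagram tells us that
\begin{align*}
\rho^{n+1}_{-1}\bigl(q_{n+1}(y)\bigr) = \rho^{n+1}_{-1}\bigl(q_{n+1} q_n \cdots q_0(x)\bigr) = \pm Q_{n+1} Q_n \cdots Q_0(x).
\end{align*}
By hypothesis the right-hand side is nonzero in $H^{*+2w(n+1)-n-2}(X;\F_p)$, so in particular $q_{n+1}(y) \neq 0$ in $BP\langle n+1\rangle^{*+2w(n+1)-n-2}(X)$.

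Next, I would invoke exactness of the top row of \eqref{Qndiag}, which comes from the fundamental cofiber sequence
\begin{align*}
\Sigma^{2(p^{n+1}-1)} BP\langle n+1 \rangle \xrightarrow{v_{n+1}} BP\langle n+1 \rangle \xrightarrow{\rho^{n+1}_n} BP\langle n \rangle \xrightarrow{q_{n+1}} \Sigma^{2p^{n+1}-1} BP\langle n+1 \rangle
\end{align*}
recalled earlier in the section. Since $q_{n+1}(y) \neq 0$, the element $y$ cannot lie in $\ker(q_{n+1}) = \operatorname{Im}(\rho^{n+1}_n)$. This gives the desired non-lifting statement. Non-triviality of $y$ itself is automatic: if $y$ were zero, then $q_{n+1}(y)$ would be zero, contradicting what we just showed.

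There is no real obstacle here; the statement is set up so that the only content is the commutativity (up to sign) of the composite of $q_i$'s with the composite of Milnor operations $Q_i$, which follows inductively from the commutative diagram \eqref{1.4Qn} applied at each stage, together with exactness at $BP\langle n \rangle^{*+2w(n)-n-1}(X)$ in the $(n+1)$-st instance of the long exact sequence. The entire proof should fit in a few lines of formal diagram chase.
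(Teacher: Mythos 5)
Your proof is correct and is precisely the diagram chase the paper intends: the paper presents Lemma~\ref{keylift} without a separate proof environment because it follows immediately from the commutativity (up to sign) of diagram~\eqref{Qndiag} and the exactness of its top row, exactly as you spell out. Your observations that $\rho^{n+1}_{-1}q_{n+1}q_n\cdots q_0(x) = \pm Q_{n+1}\cdots Q_0(x) \neq 0$, that this forces $q_{n+1}(q_n\cdots q_0(x))\neq 0$, and that exactness then excludes $q_n\cdots q_0(x)$ from $\operatorname{Im}(\rho^{n+1}_n)$ while also giving non-triviality, are all exactly the intended argument.
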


\begin{remark}\label{wilsoninjective} 
In \cite{wilson}, Wilson showed that, for every space $X$ and integer $n \ge 0$, the natural homomorphism 
\[
BP^k(X) \to BP\langle n \rangle^k(X)
\]
is surjective for $k \le 2w(n)$. 
Hence the lowest even degree in which we can hope to find classes in $BP\langle n \rangle^k(X)$ which cannot be lifted to $BP\langle n+1 \rangle^k(X)$ is $k= 2w(n)+2$. This means that an element $x \in H^*(X; \F_p)$ with the properties in Lemma \ref{keylift} must be of degree at least $n+3$. 
\end{remark}

\section{$BP\langle n \rangle$-classes for elementary abelian $p$-groups}

In this section, we look at the $BP\langle n \rangle$-cohomology of the classifying spaces of elementary abelian $p$-groups. 
The generalized cohomology of such spaces is a well studied subject in the literature (see e.g. \cite{powell} and \cite{strickland} for the case $BP\langle n \rangle$). The goal of this section is merely to specify concrete elements with the properties needed to apply Lemma \ref{keylift}.  
The cases $p=2$ or $p$ odd are very similar. However, for the convenience of the reader, we provide the computations for both cases separately. 
To simplify the notation, for $G$ a group and $h$ a cohomology theory, we write $h^*(G)$ for $h^*(BG_+)$, where $BG$ denotes the classifying space of $G$.

\subsection{Non-liftable $BP\langle n \rangle$-classes for $p=2$}

We start with the case $p=2$. 
Recall that Milnor's operations 
\[
Q_n \colon H^i(X; \F_2) \to H^{i+2^{n+1}-1}(X; \F_2)
\]
are defined inductively in terms of Steenrod squares by 
\begin{align*}
Q_0 & = Sq^1,\\
Q_{n+1} & = Sq^{2^{n+1}}Q_n + Q_nSq^{2^{n+1}}.
\end{align*}

A very convenient fact about the $Q_n$'s is that they are primitive elements of the Steenrod algebra, i.e.,
\bq\label{Qprim}
Q_n(xy) = Q_n(x)y + xQ_n(y).
\eq

Let $G_k$ be the $k$-fold product of $\Z/2$, i.e., $G_k=(\Z/2)^k$. The $\F_2$-cohomology of $G_k$ is given by the formula
\[
H^*((\Z/2)^k; \F_2) \cong \F_2[x_1,\ldots,x_k].
\]


\begin{lemma}\label{1Qaction}
Let $x$ be a polynomial generator of $H^*((\Z/2)^k; \F_2)$. For $i\ge 0$, the Milnor operation $Q_i$ acts on $x$ by
\begin{align}
Q_i(x) & = x^{2^{i+1}} \label{Qix1} \\ 
Q_i(x^{2n}) & = 0 ~\text{for all}~n \ge 1 \label{Qixn}.
\end{align}
\end{lemma}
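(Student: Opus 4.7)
The plan is to handle \eqref{Qixn} first, then prove \eqref{Qix1} by induction on $i$, both computations being short applications of the defining recursion and of the derivation property \eqref{Qprim}.

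For \eqref{Qixn}, observe that writing $x^{2n}=(x^n)^2$ and applying the primitivity formula \eqref{Qprim} gives
\[
Q_i(x^{2n}) \;=\; Q_i(x^n)\cdot x^n + x^n \cdot Q_i(x^n) \;=\; 2\, x^n Q_i(x^n) \;=\; 0
\]
since we are working with $\F_2$-coefficients. This takes care of the second assertion for every $i \geq 0$.

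For \eqref{Qix1}, I would induct on $i$. The base case $i=0$ is immediate: since $x$ is a polynomial generator, it lies in degree $1$, so $Q_0(x)=Sq^1(x)=x^2=x^{2^1}$ by the unstable squaring property $Sq^{|y|}(y)=y^2$. For the inductive step, assume $Q_i(x)=x^{2^{i+1}}$. The recursion $Q_{i+1}=Sq^{2^{i+1}}Q_i + Q_i Sq^{2^{i+1}}$ gives
\[
Q_{i+1}(x) \;=\; Sq^{2^{i+1}}\!\left(x^{2^{i+1}}\right) + Q_i\!\left(Sq^{2^{i+1}}(x)\right).
\]
The second summand vanishes because $|x|=1 < 2^{i+1}$ forces $Sq^{2^{i+1}}(x)=0$ for all $i\geq 0$. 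The first summand is computed by noting that $x^{2^{i+1}}$ sits in degree $2^{i+1}$, so $Sq^{2^{i+1}}\!\big(x^{2^{i+1}}\big) = \big(x^{2^{i+1}}\big)^2 = x^{2^{i+2}}$, closing the induction.

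There is really no hard step here; the only point requiring a bit of care is making sure that in the recursion one summand dies on degree grounds and the other reduces to a top squaring operation on $x^{2^{i+1}}$. This is why the statement is formulated specifically for a polynomial generator (degree one), which makes both simplifications automatic.
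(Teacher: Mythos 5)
Your proposal is correct. The argument for \eqref{Qix1} is essentially the paper's own: induct on $i$, kill the cross term $Q_i\bigl(Sq^{2^{i+1}}(x)\bigr)$ on degree grounds since $|x|=1<2^{i+1}$, and apply the unstable squaring property to the surviving summand. The genuine divergence is in your treatment of \eqref{Qixn}, and it is the cleaner of the two. The paper proves \eqref{Qixn} simultaneously with \eqref{Qix1} by induction on $i$: the base case $i=0$ is a binomial-coefficient computation $Sq^1(x^{2n})=\binom{2n}{1}x^{2n+1}=0 \pmod 2$, and the inductive step expands $Q_i=Sq^{2^i}Q_{i-1}+Q_{i-1}Sq^{2^i}$, relying both on $Q_{i-1}(x^{2n})=0$ and on the (unstated but needed) observation that $Sq^{2^i}(x^{2n})$ is a scalar times an even power of $x$, so that $Q_{i-1}$ kills it too. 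You instead write $x^{2n}=(x^n)^2$ and invoke the derivation property directly to get $Q_i\bigl((x^n)^2\bigr)=2\,x^n Q_i(x^n)=0$, which disposes of all $i\ge 0$ at once with no induction and no Steenrod-square bookkeeping. This is shorter, needs only the primitivity of $Q_i$ (already recorded in the paper as a key fact), and sidesteps the mild subtlety in the paper's inductive step; the paper's route has the virtue of exercising the same $Sq$-recursion machinery used for \eqref{Qix1}, but buys nothing else. Both are valid.
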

\begin{proof}
For $i=0$, we have $Q_0(x)=Sq^1(x)=x^2$ and 
\[
Q_0(x^{2n})= Sq^1x^{2n} = \binom {2n} {1} x^{2n+1} = 2nx^{2n+1} = 0
\]
for $n\ge 1$, since we are working modulo $2$. 
For $i\ge 1$, we proceed by induction using \eqref{Qprim}. For $n=0$, we get
\begin{align*}
Q_i(x) = (Sq^{2^i}Q_{i-1} + Q_{i-1}Sq^{2^i})(x) = Sq^{2^i}(x^{2i}) + Q_{i-1}(0)  = x^{2^{i+1}}.
\end{align*}
For $n\ge 1$, we have
\begin{align*}
Q_i(x^{2n}) = (Sq^{2^i}Q_{i-1} + Q_{i-1}Sq^{2^i})(x^{2n})  = Sq^{2^i}Q_{i-1}(x^{2n}) + Q_{i-1}Sq^{2^i}(x^{2n}) =0 
\end{align*}
by the induction hypothesis on $Q_{i-1}$. 
%
\end{proof}

\begin{lemma}\label{nQaction}
For $m\le k$, let $x_1,\ldots, x_m$ be distinct polynomial generators of $H^*((\Z/2)^k; \F_2)$. The effect of the iterated Milnor operations on the product $x_1\cdots x_m$ is given by
\begin{align}
Q_nQ_{n-1}\cdots Q_1Q_0(x_1\cdots x_m) & = 0 ~\text{for}~ m\le n \label{nQnx} \\
Q_nQ_{n-1}\cdots Q_1Q_0(x_1\cdots x_m) & = \sum x_1^{j_1}x_2^{j_2} \cdots x_m^{j_m} ~\text{for}~ m \ge n+1 \label{nQmx}
\end{align}
where the sum is taken over all permutations $(j_1, \ldots, j_m)$ of the set of $m$ numbers $\{2^{n+1}, 2^n, \ldots, 2^1, 1, \ldots, 1\}$ with $m-(n+1)$ many $1$'s. 
In particular, we get 
\begin{align*}
Q_nQ_{n-1}\cdots Q_1Q_0(x_1\cdots x_m) \ne 0 ~\text{for}~ m \ge n+1.
\end{align*}
\end{lemma}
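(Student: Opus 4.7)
The plan is a straightforward induction on $n$, leveraging the two ingredients already established: the computation of $Q_i$ on a polynomial generator (Lemma \ref{1Qaction}) and the primitivity / Leibniz rule \eqref{Qprim}. The base case $n=0$ follows immediately: applying the Leibniz rule to $x_1\cdots x_m$ and using $Q_0(x_i)=x_i^2$ yields $Q_0(x_1\cdots x_m)=\sum_{i=1}^{m} x_1\cdots x_i^{2}\cdots x_m$, which is exactly the claimed permutation sum of the multiset $\{2,1,\ldots,1\}$ (and is $0$ only when $m=0$).

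For the inductive step, I would assume the formula at level $n-1$ and expand $Q_n\bigl[Q_{n-1}\cdots Q_0(x_1\cdots x_m)\bigr]$. Three cases must be handled. If $m\le n-1$, the inductive hypothesis already gives $0$. If $m=n$, the inductive hypothesis says that $Q_{n-1}\cdots Q_0(x_1\cdots x_m)$ is a sum of monomials in which \emph{every} variable carries an even power $2^{k+1}$ for some $k\in\{0,\dots,n-1\}$; applying $Q_n$ termwise via \eqref{Qprim} and invoking \eqref{Qixn} on each factor kills every summand, proving \eqref{nQnx}. If $m\ge n+1$, the inductive hypothesis writes $Q_{n-1}\cdots Q_0(x_1\cdots x_m)$ as a sum over ordered tuples $(i_0,\dots,i_{n-1})$ of distinct indices of monomials of the form $x_{i_0}^{2}x_{i_1}^{4}\cdots x_{i_{n-1}}^{2^{n}}\prod_{j\notin\{i_0,\dots,i_{n-1}\}} x_j$. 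Applying $Q_n$ to such a monomial, the Leibniz rule together with \eqref{Qixn} annihilates every factor carrying an even power, so only the weight-$1$ factors contribute; each produces $Q_n(x_{i_n})=x_{i_n}^{2^{n+1}}$ by \eqref{Qix1}. Reindexing, I obtain the sum over all distinct tuples $(i_0,\dots,i_n)$ of the monomials $x_{i_0}^{2}x_{i_1}^{4}\cdots x_{i_n}^{2^{n+1}}\prod_{j\notin\{i_0,\dots,i_n\}} x_j$, which is precisely the permutation sum asserted in \eqref{nQmx}.

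The nonvanishing in the range $m\ge n+1$ is then essentially free: the exponents $2^{n+1},2^{n},\dots,2$ are pairwise distinct, so two distinct ordered tuples $(i_0,\dots,i_n)$ of distinct indices produce distinct monomials of $\F_2[x_1,\dots,x_k]$ (the exponent at position $i_k$ recovers $k$). Hence there is no cancellation, the sum is nonempty whenever $m\ge n+1$, and the claim follows.

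The only place where one has to be a bit careful is the bookkeeping in the inductive step, specifically checking that after applying $Q_n$ the previously raised variables $x_{i_0},\dots,x_{i_{n-1}}$ really do get killed (so that no term is produced where $Q_n$ "hits" one of them), since in characteristic $2$ any stray duplicate term would silently vanish and could mask an error. This is exactly what formula \eqref{Qixn} of Lemma \ref{1Qaction} is designed to guarantee, which is why the induction closes cleanly without any subtler algebraic manipulation.
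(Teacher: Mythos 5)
Your proof is correct and matches the paper's own argument essentially step for step: both proceed by induction on $n$, using the Leibniz rule \eqref{Qprim} together with the single-generator computations of Lemma \ref{1Qaction} to split the inductive step into the cases $m\le n-1$, $m=n$, and $m\ge n+1$, with nonvanishing following because the distinct exponents $2^{n+1},\dots,2$ make all monomials in the resulting sum distinct. The only cosmetic difference is that the paper records the case $n=1$, $m=1$ as an extra base case before launching the induction, whereas you correctly note it is already handled by the $m=n$ branch of the inductive step.
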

\begin{proof}
For $n=0$, Lemma \ref{1Qaction} and equation \eqref{Qprim} imply 
\[
Q_0(x_1\cdots x_m) = x_1^2x_2\cdots x_m + x_1x_2^2\cdots x_m + \cdots + x_1x_2\cdots x_m^2
\]
where in each summand there is exactly one exponent equal to $2$ and all others are equal to $1$. This proves the case $n=0$. For $n=1$ and $m=1$, we have 
\[
Q_1Q_0(x_1)= Q_1(x_1^2)=0
\]
by \eqref{Qixn}. Hence equations \eqref{nQnx} and \eqref{nQmx} hold for $n=0$ and the case $n=1$ and $m=1$. 
For $n\ge 1$ and $m\ge 1$, we proceed by induction. 
If $m \le n-1$, then $Q_{n-1}\cdots Q_1Q_0(x_1\cdots x_m) = 0$ by the induction hypothesis. 
So assume $m \ge n$. By the induction hypothesis, we have 
\begin{align*}
Q_nQ_{n-1}\cdots Q_1Q_0(x_1\cdots x_m) & = Q_n(\sum x_1^{j_1}x_2^{j_2} \cdots x_m^{j_m})
\end{align*}
where the sum is taken over all permutations $(j_1, \ldots, j_m)$ of the set of $m$ numbers $\{2^{n}, 2^{n-1}, \ldots, 2^1, 1, \ldots, 1\}$. 
For each summand, equation \eqref{Qprim} implies
\begin{align}
 & Q_n(x_1^{j_1}x_2^{j_2} \cdots x_m^{j_m}) = Q_n(x_1^{j_1})(x_2^{j_2} \cdots x_m^{j_m}) + x_1^{j_1}Q_n(x_2^{j_2} \cdots x_m^{j_m)}) \nonumber \\
& = Q_n(x_1^{j_1})(x_2^{j_2} \cdots x_m^{j_m}) + x_1^{j_1}(Q_n(x_2^{j_2})(x_3^{j_3} \cdots x_m^{j_m}) + x_2^{j_2}Q_n(x_3^{j_3} \cdots x_m^{j_m})) \nonumber \\
& \vdots  \nonumber \\
& = Q_n(x_1^{j_1})x_2^{j_2} \cdots x_m^{j_m} + x_1^{j_1}Q_n(x_2^{j_2})x_3^{j_3} \cdots x_m^{j_m} + \cdots + x_1^{j_1} \cdots x_{m-1}^{j_{m-1}}Q_n(x_m^{j_m}). \label{Qnline}
\end{align}

If $m=n$, then each $(j_1, \ldots, j_n)$ is a bijection of $\{1, \ldots,m\}$ with $\{2^n, 2^{n-1}, \ldots, 2^2, 2^1\}$. In particular, we have $j_i \ge 2$ for all $i=1, \ldots, n$. By formula \eqref{Qixn}, this implies that $Q_n(x_i^{j_i}) =0$ for $i=1,\ldots, n$. Hence, if $m=n$, line \eqref{Qnline} is equal to zero. This finishes the proof of equation \eqref{nQnx}. 

If $m\ge n+1$, then we have $Q_n(x_i^{j_i}) =0$ if $j_i\ge 2$ by \eqref{Qixn} and $Q_n(x_i^{j_i}) = x_i^{2^{n+1}}$ if $j_i =1$ by \eqref{Qix1}. 
This implies that, for a fixed permutation $(j_1, \ldots, j_m)$ of the set of $m$ numbers $\{2^{n}, 2^{n-1}, \ldots, 2^1, 1, \ldots, 1\}$, 
the element $Q_n(x_1^{j_1}x_2^{j_2} \cdots x_m^{j_m})$ is the sum of terms of the form $x_1^{j_1}\cdots x_{i-1}^{j_{i-1}} x_i^{2^{n+1}} x_{i+1}^{j_{i+1}} \cdots x_m^{j_m}$, one summand for each $i$ with $j_i=1$. 
Taking the sum over all permutations $(j_1, \ldots, j_m)$ of the set of $m$ numbers $\{2^{n}, 2^{n-1}, \ldots, 2^1, 1, \ldots, 1\}$ ($m-n$ many $1$'s), then yields
\begin{align*}
Q_nQ_{n-1}\cdots Q_1Q_0(x_1\cdots x_m) & = \sum x_1^{j'_1}x_2^{j'_2} \cdots x_m^{j'_m}
\end{align*}
where the sum is taken over all bijections $(j'_1, \ldots, j'_m)$ of $\{1, \ldots, m\}$ with the set of $m$ numbers $\{2^{n+1}, 2^n, \ldots, 2^1, 1, \ldots, 1\}$ ($m-n-1$ many $1$'s). 
This finishes the proof of equation \eqref{nQmx} and the lemma. 
\end{proof}

Recall that the degree of $Q_i$ is $|Q_i| = 2^{i+1}-1$ and hence 
\begin{align*}
\sum_{i=0}^{n}|Q_i| = \sum_{i=0}^{n}2^{i+1} - 1 = 2^{n+2} - 2 - (n+1) = 2^{n+2}-3-n.
\end{align*}

\begin{prop}\label{p=2nt}
Let $n$ be an integer $\ge 0$ and let $k$ and $m$ be integers such that $k\ge m \ge n+2$. Let $G_k = (\Z/2)^k$ and $x_1,\ldots, x_m$ be distinct polynomial generators of $H^*((\Z/2)^k; \F_2)$. 
Then the element $q_{n}\cdots q_0(x_1\cdots x_m)$ is nontrivial in the group $BP\langle n \rangle^{m+2^{n+2}-3-n}(G_k)$ and is not contained in the image of the map   
\[
\rho^{n+1}_n \colon BP\langle n+1 \rangle^{m+2^{n+2}-3-n}(G_k) \to BP\langle n \rangle^{m+2^{n+2}-3-n}(G_k).
\]
\end{prop}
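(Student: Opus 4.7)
The plan is to combine Lemma~\ref{keylift} with Lemma~\ref{nQaction} applied to the class $x := x_1 \cdots x_m \in H^m(BG_k;\F_2)$; note that $|x| = m$ since each $x_i$ has degree one at $p=2$. If we can show that $Q_{n+1}Q_n \cdots Q_0(x) \neq 0$, then Lemma~\ref{keylift} immediately produces the element $q_n \cdots q_0(x)$ as a nontrivial class in $BP\langle n \rangle^*(G_k)$ that does not lie in the image of $\rho^{n+1}_n$. So only the nonvanishing in mod-$p$ cohomology and the degree identity have to be checked.

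For the nonvanishing, I would invoke equation~\eqref{nQmx} of Lemma~\ref{nQaction} with $n$ replaced by $n+1$. The hypothesis $m \geq (n+1)+1$ is guaranteed by the standing assumption $m \geq n+2$, and the lemma then gives
\[
Q_{n+1}Q_n \cdots Q_0(x_1 \cdots x_m) \;=\; \sum x_1^{j_1} x_2^{j_2} \cdots x_m^{j_m},
\]
with the sum running over multiset permutations of $\{2^{n+2}, 2^{n+1}, \ldots, 2, 1, \ldots, 1\}$ with $m-n-2$ trailing ones. The decisive observation is that distinct multiset permutations yield pairwise distinct monomials in the polynomial ring $H^*(BG_k;\F_2) = \F_2[x_1,\ldots,x_k]$, so this sum of monomials with coefficient one cannot collapse to zero modulo $2$.

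For the degree, the class $x_1 \cdots x_m$ sits in degree $m$, and the iterated composite $q_n \cdots q_0$ raises degree by
\[
\sum_{i=0}^{n} |Q_i| \;=\; \sum_{i=0}^{n}(2^{i+1}-1) \;=\; 2^{n+2} - n - 3,
\]
placing $q_n \cdots q_0(x_1 \cdots x_m)$ in $BP\langle n \rangle^{m + 2^{n+2} - 3 - n}(G_k)$, exactly as claimed. I do not anticipate any substantive obstacle: the combinatorial work is entirely absorbed into Lemma~\ref{nQaction}, and the proof is a straightforward assembly of the two preceding lemmas together with the bookkeeping of $|Q_i|$ at the prime two.
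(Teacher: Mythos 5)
Your proof is correct and follows the same route as the paper: apply Lemma~\ref{keylift} to the class $x_1\cdots x_m$, using equation~\eqref{nQmx} of Lemma~\ref{nQaction} (with $n$ replaced by $n+1$, valid because $m\ge n+2$) to see that $Q_{n+1}\cdots Q_0(x_1\cdots x_m)\ne 0$. Your added explanation of why the sum of monomials is nonzero simply re-derives the ``in particular'' clause already recorded in Lemma~\ref{nQaction}, and the degree bookkeeping matches the computation in the paper.
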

\begin{proof}
We know by diagram \eqref{Qndiag} that 
\[
\rho^{n+1}_{-1}q_{n+1}(q_{n}\cdots q_0(x_1\cdots x_m)) = Q_{n+1}\cdots Q_0(x_1\cdots x_m) ~\text{in}~H^{m+2^{n+3}-4-n}(G_k; \F_2).
\] 
By Lemma \ref{nQaction}, we know that $Q_{n+1}\cdots Q_0(x_1\cdots x_m)$ nontrivial if $m\ge n+2$. 
The assertion then follows from Lemma \ref{keylift}.  
\end{proof}

\begin{remark}\label{p=2interestingcase}
For our application to algebraic varieties, we are interested in elements in even degree in $BP\langle n \rangle^*(G_k)$. Hence the minimal values for $k$ and $m$ such that $q_{n}\cdots q_0(x_1\cdots x_m)$ is nontrivial is $k=m=n+3$. In this case we have 
\[
q_{n}\cdots q_0(x_1\cdots x_{n+3}) \ne 0 ~\text{in}~ BP\langle n \rangle^{2^{n+2}}(G_{n+3})
\]
and cannot be lifted to $BP\langle n+1 \rangle^{2^{n+2}}(G_{n+3})$. 
\end{remark}


\subsection{Non-liftable $BP\langle n \rangle$-classes for odd primes}

In this section, let $p$ be an odd prime. 
Let $G_k$ be the $k$-fold product of $\Z/p$, i.e., $G_k=(\Z/p)^k$. The $\F_p$-cohomology of $G_k$ is given by 
\begin{align}\label{gcohomology}
H^*((\Z/p)^k; \F_p) \cong \Lambda(x_1,\ldots, x_k)\otimes \F_p[y_1,\ldots, y_k]
\end{align}
with $|x_i|=1$ and $|y_i|=2$ (and $x_i^2=0$) for $i=1, \ldots, k$. 
The Bockstein homomorphism 
\[
\beta \colon H^*(G_k;\Z/p) \to H^{*+1}(G_k; \Z/p).
\]
sends $x_i$ to $y_i$, i.e., $\beta(x_i)=y_i$ for $i=1, \ldots, k$. The Milnor operations $Q_n$ in the mod $p$-Steenrod algebra can be defined recursively by
\begin{align*}
Q_0 & = \beta,\\
Q_{n+1} & = P^{p^n}Q_n - Q_nP^{p^n}
\end{align*}
where $P^i$ is the $i$th reduced $p$th power operation. The $Q_n$ are primitive elements, i.e., $Q_n(xy) = Q_n(x)y + (-1)^{|x|\cdot |Q_n|}xQ_n(y)$. Since the degree of $Q_n$ is always odd, this means
\bq\label{pQprim}
Q_n(xy) = Q_n(x)y + (-1)^{|x|}xQ_n(y).
\eq

\begin{lemma}
The action of the Milnor operations on the generators of $H^*((\Z/p)^k; \F_p)$ is given by 
\begin{align}
Q_n(x_i) & = y_i^{p^n} \label{Qnxp} \\
Q_n(y_i) & = 0. \label{Qnyp}
\end{align}
\end{lemma}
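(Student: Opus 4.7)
The plan is to proceed by induction on $n$, using the recursive definition $Q_{n+1} = P^{p^n}Q_n - Q_n P^{p^n}$ together with two elementary facts about the reduced $p$th powers: $P^i$ vanishes on classes of degree less than $2i$, and $P^i(z) = z^p$ whenever $|z| = 2i$. These together with the Leibniz rule for $\beta$ should reduce the verification to straightforward degree counts.

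The base case $n=0$ is immediate from $Q_0 = \beta$: by definition $\beta(x_i) = y_i = y_i^{p^0}$, while $\beta(y_i) = \beta(\beta(x_i)) = 0$ since $\beta^2 = 0$.

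For the inductive step on $Q_{n+1}(x_i)$, note that $|x_i| = 1 < 2p^n$, so $P^{p^n}(x_i) = 0$, killing the second term in the recursion. By induction $Q_n(x_i) = y_i^{p^n}$, which has degree exactly $2p^n$, so $P^{p^n}(y_i^{p^n}) = (y_i^{p^n})^p = y_i^{p^{n+1}}$, yielding the desired formula.

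For $Q_{n+1}(y_i)$, the first term $P^{p^n}Q_n(y_i)$ vanishes by the induction hypothesis. For $n \geq 1$, also $|y_i| = 2 < 2p^n$, so $P^{p^n}(y_i) = 0$ and the second term vanishes too. The only genuinely nontrivial case is $n = 0$, where $P^1(y_i) = y_i^p$, giving $Q_1(y_i) = -\beta(y_i^p)$; here one applies the Leibniz rule for $\beta$ to get $\beta(y_i^p) = p y_i^{p-1}\beta(y_i) = 0$, either because $\beta(y_i) = 0$ from the base case or because the coefficient $p$ vanishes mod $p$. The only mild obstacle is this last step, but it poses no serious difficulty; everything else is dictated by dimension.
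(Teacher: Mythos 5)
Your proof is correct and follows the same induction as the paper, using the recursion $Q_{n+1}=P^{p^n}Q_n-Q_nP^{p^n}$ together with the unstable vanishing of $P^i$ below degree $2i$ and the $p$th-power property of $P^i$ in degree $2i$. In fact you are slightly more careful than the paper's own argument: the paper's proof of $Q_n(y_i)=0$ only notes parenthetically that $P^{p^{n-1}}(y_i)=0$ for $n\ge 2$, leaving the remaining term $Q_0P^1(y_i)=\beta(y_i^p)$ unaddressed, whereas you explicitly dispose of it via the Leibniz rule for the Bockstein.
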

\begin{proof}
For $n=0$, we have $Q_0(x_i)=\beta(x_i)=y_i$, and $Q_0(y_i)=\beta(\beta(x_i))=0$. 
For $n\ge 1$, we proceed by induction. For $x_i$, we get
\begin{align*}
Q_n(x_i) & = (P^{p^{n-1}}Q_{n-1} - Q_{n-1}P^{p^{n-1}})(x_i) \\
& = P^{p^{n-1}}Q_{n-1}(x_i) - Q_{n-1}P^{p^{n-1}}(x_i) \\
& = P^{p^{n-1}}(y_i^{p^{n-1}}) - Q_{n-1}(0) = (y_i^{p^{n-1}})^p \\
& = y_i^{p^n}.
\end{align*}
For $y_i$, we get 
\begin{align*}
Q_n(y_i) & = (P^{p^{n-1}}Q_{n-1} - Q_{n-1}P^{p^{n-1}})(y_i) \\
& = P^{p^{n-1}}Q_{n-1}(y_i) - Q_{n-1}P^{p^{n-1}})(y_i)\\
& = 0
\end{align*}
since $Q_{n-1}$ acts trivially on $y_i$ by the induction hypothesis (and $P^{p^{n-1}}(y_i)=0$ for $n\ge 2$). 
\end{proof}

In order to facilitate the bookkeeping in the next lemma, we set 
\begin{align}\label{halfnot}
y_i^{\frac{1}{2}}:=x_i~\text{for}~i=1,\ldots,k
\end{align}
in $H^*((\Z/p)^k; \F_p) = \Lambda(x_1,\ldots, x_k)\otimes \F_p[y_1,\ldots, y_k]$.

\begin{lemma}\label{nQactionp}
With notation as in \eqref{gcohomology} and \eqref{halfnot} and $m\le k$, we have 
\begin{align*}
Q_nQ_{n-1}\cdots Q_0(x_1\cdots x_m) & = 0 ~\text{for}~ m\le n \\ 
Q_nQ_{n-1}\cdots Q_0(x_1\cdots x_m) & = \sum (-1)^{\rho(j)} y_1^{j_1}y_2^{j_2} \cdots y_m^{j_m} ~\text{for}~ m \ge n+1 
\end{align*}
where the sum is taken over all bijections $j=(j_1, \ldots, j_m)$ of the set $\{1, \ldots, m\}$ with the set of $m$ numbers $\{p^n, p^{n-1}, \ldots, p, 1, \frac{1}{2}, \ldots, \frac{1}{2}\}$ with $m-(n+1)$ many $\frac{1}{2}$'s, and $\rho(j)$ is the sum over the numbers $a_t$, for $t\in \{1, \ldots, n\}$, defined as follows: let $i(t)$ be the index with $j_{i(t)}=p^t$; then $a_t$ is the number of indices $s \in \{1, \ldots, m\}$ with $s < i(t)$ and either $j_s=\frac{1}{2}$ or $j_s >p^t$. 

In particular, we get 
\begin{align*}
Q_nQ_{n-1}\cdots Q_1Q_0(x_1\cdots x_m) \ne 0 ~\text{for}~ m\ge n+1.
\end{align*}
\end{lemma}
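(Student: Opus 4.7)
Proceed by induction on $n$. For the base case $n = 0$, iterate the Leibniz rule \eqref{pQprim} for $Q_0 = \beta$ on $x_1 \cdots x_m$ and use $\beta(x_i) = y_i$; the result is the sum of $m$ monomials, each obtained by replacing one $x_i$ with $y_i$ (up to Koszul signs from passing $\beta$ over preceding $x_r$). Under the convention $x_i = y_i^{1/2}$, this is precisely the sum indexed by bijections of $\{1, \ldots, m\}$ with $\{1, \tfrac{1}{2}, \ldots, \tfrac{1}{2}\}$. Non-vanishing for $m \ge 1$ is immediate since these monomials are linearly independent in $\Lambda(x_1, \ldots, x_k) \otimes \F_p[y_1, \ldots, y_k]$.

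For the inductive step, assume the formula holds at level $n - 1$. The cases $m \le n - 1$ and $m = n$ follow from the inductive hypothesis combined with the identity $Q_n(y_i^k) = 0$ for every $k \ge 1$, which is an easy consequence of \eqref{Qnyp} and the Leibniz rule (induct on $k$, noting that $|y_i|$ is even so no signs arise). Indeed, for $m \le n - 1$ the inductive expression already vanishes; for $m = n$ it is a sum of monomials $y_1^{j_1} \cdots y_m^{j_m}$ with all $j_i \ge 1$, on which $Q_n$ acts termwise as zero.

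For $m \ge n + 1$, apply $Q_n$ termwise to the expansion
\[
Q_{n-1} \cdots Q_0(x_1 \cdots x_m) = \sum_j (-1)^{\rho^{(n-1)}(j)}\, y_1^{j_1} \cdots y_m^{j_m},
\]
indexed by bijections of $\{1,\ldots,m\}$ with $\{p^{n-1}, \ldots, p, 1, \tfrac{1}{2}, \ldots, \tfrac{1}{2}\}$ (with $m - n$ half-exponents). By \eqref{pQprim} together with $Q_n(y_i^k) = 0$ for $k \ge 1$, only the positions $s$ with $j_s = \tfrac{1}{2}$ contribute, and each such position converts $x_s$ to $y_s^{p^n}$ via \eqref{Qnxp}. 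Summing over all (old bijection, half-exponent position) pairs yields precisely one term for each bijection with $\{p^n, p^{n-1}, \ldots, p, 1, \tfrac{1}{2}, \ldots, \tfrac{1}{2}\}$, since the position of $p^n$ in the new bijection uniquely determines which half-exponent was converted. The Leibniz sign from moving $Q_n$ over $y_1^{j_1} \cdots y_{s-1}^{j_{s-1}}$ equals $(-1)^{\#\{r < s : j_r = 1/2\}}$, because only the odd-degree factors $x_r = y_r^{1/2}$ contribute modulo $2$. Non-vanishing for $m \ge n+1$ then follows at once because the resulting monomials are pairwise distinct with coefficients $\pm 1$. The main obstacle is verifying that combining this Leibniz sign with $(-1)^{\rho^{(n-1)}(j)}$ reproduces $(-1)^{\rho^{(n)}}$ of the new bijection; this reduces to the observation that $p^n$ is strictly larger than every $p^t$ with $t \le n - 1$, so the counts $a_t$ are preserved (both $\tfrac{1}{2}$ and $p^n$ qualify for inclusion in $a_t$), while $a_n$ of the new bijection is exactly the Leibniz sign exponent.
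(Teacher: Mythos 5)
Your proof is correct and follows essentially the same inductive approach as the paper (base case $n=0$ via Leibniz, vanishing for $m\le n$ from $Q_n(y_i^k)=0$, and termwise application of $Q_n$ for $m\ge n+1$ replacing each $j_i=\tfrac12$ by $p^n$); you are in fact more explicit than the paper in checking that the Koszul sign combines with $(-1)^{\rho^{(n-1)}(j)}$ to give $(-1)^{\rho^{(n)}(j')}$, which the paper leaves implicit. One small point worth flagging: for this bookkeeping to match the base case (where $Q_0$ already produces the alternating signs $(-1)^{i-1}$), the index $t$ in the definition of $\rho(j)$ should run over $\{0,1,\ldots,n\}$ rather than $\{1,\ldots,n\}$ as printed; this does not affect the non-vanishing conclusion, since all coefficients are $\pm 1$ on distinct monomials.
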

\begin{proof}
For $n=0$, we have 
\[
Q_0(x_1\cdots x_m) = y_1x_2\cdots x_m - x_1y_2x_3\cdots x_m + \cdots + (-1)^m x_1x_2\cdots y_m.
\]
Keeping notation \eqref{halfnot} in mind, this proves the assertion for $n=0$.

For $n=1$ and $m=1$, we have 
$Q_1Q_0(x_1)= Q_1(y_1)=0$ 
by \eqref{Qnyp}. 
For $n\ge 1$ and $m\ge 1$, we proceed by induction. 
If $m \le n-1$, then $Q_{n-1}\cdots Q_1Q_0(x_1\cdots x_m) = 0$ by the induction hypothesis. 
So assume $m \ge n$. By the induction hypothesis, we have 
\begin{align*}
Q_nQ_{n-1}\cdots Q_0(x_1\cdots x_m) & = Q_n(\sum (-1)^{\rho(j)} y_1^{j_1}y_2^{j_2} \cdots y_m^{j_m})
\end{align*}
where the sum is taken over all bijections $j=(j_1, \ldots, j_m)$ of the set $\{1, \ldots, m\}$ with the set of $m$ numbers $\{p^{n-1}, \ldots, p, 1, \frac{1}{2}, \ldots, \frac{1}{2}\}$ (with $m-n$ many $\frac{1}{2}$'s).

If $m=n$, then each $(j_1, \ldots, j_n)$ is a permutation of the set $\{p^{n-1}, \ldots, p, 1\}$, or in other words, there is no $x_i$ left. Then \eqref{Qnyp} implies that all summands vanish under $Q_n$. This proves the assertion for $m=n$.

Now we assume $m\ge n+1$. Let $j$ be a fixed bijection from $\{1, \ldots, m\}$ to $\{p^{n-1}, \ldots, p, 1, \frac{1}{2}, \ldots, \frac{1}{2}\}$. 
By formula \eqref{pQprim}, applying $Q_n$ to $y_1^{j_1}y_2^{j_2} \cdots y_m^{j_m}$ yields new summands, one for each $i\in \{1, \ldots, m\}$ with $j_i=\frac{1}{2}$, of the form
\[
(-1)^{a} y_1^{j_1}\cdots y_{i-1}^{j_{i-1}}y_i^{p^n}y_{i+1}^{j_{i+1}} \cdots y_m^{j_m}
\]
where $a$ is given by the number of indices $s \in \{1, \ldots, m\}$ with $s<i$ and $j_s=\frac{1}{2}$. Now taking the sum over all bijections $j$ from $\{1, \ldots, m\}$ to $\{p^{n-1}, \ldots, p, 1, \frac{1}{2}, \ldots, \frac{1}{2}\}$, we obtain that 
\[
Q_n\cdots Q_0(x_1\cdots x_m) = \sum (-1)^{\rho(j')} y_1^{j'_1}y_2^{j'_2} \cdots y_m^{j'_m}
\]
where the sum is now over all bijections $j'$ of $\{1, \ldots, m\}$ with $\{p^n, \ldots, p, 1, \frac{1}{2}, \ldots, \frac{1}{2}\}$. 
\end{proof}

Recall $|Q_i| = 2p^i-1$ and $\sum_{i=0}^{n}|Q_i| = \sum_{i=0}^{n}2p^i - 1 = 2w(n) - n-1$ where we write $w(n):= p^n+\cdots +1$.

\begin{prop}\label{pnt}
Let $n$ be an integer $\ge 0$, and let $k$ and $m$ be integers such that $k\ge m \ge n+2$. Let $G_k = (\Z/p)^k$ and $x_1,\ldots, x_m$ be distinct exterior algebra generators of $H^*((\Z/p)^k; \F_p)$ as in formula \eqref{gcohomology}. 
Then the element $q_{n}\cdots q_0(x_1\cdots x_m)$ is nontrivial in the group $BP\langle n \rangle^{m+2w(n)-n-1}(G_k)$ and is not contained in the image of the map 
\[
\rho^{n+1}_n \colon BP\langle n+1 \rangle^{m+2w(n)-n-1}(G_k) \to BP\langle n \rangle^{m+2w(n)-n-1}(G_k).
\]
\end{prop}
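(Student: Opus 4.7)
The plan is to mirror the argument for $p=2$ used in Proposition \ref{p=2nt}, replacing Lemma \ref{nQaction} by the odd-prime analogue Lemma \ref{nQactionp}. The commutative diagram \eqref{Qndiag} and the lifting criterion Lemma \ref{keylift} are stated independently of the parity of $p$, so the same two-step strategy applies essentially verbatim.

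First, I would set $x := x_1\cdots x_m \in H^{m}(G_k; \F_p)$ and feed it through diagram \eqref{Qndiag}. The commutativity of that diagram yields
\[
\rho^{n+1}_{-1}\bigl(q_{n+1}(q_n\cdots q_0(x))\bigr) \;=\; \pm\, Q_{n+1}Q_n\cdots Q_0(x)
\]
in $H^{m+2w(n+1)-n-2}(G_k;\F_p)$, so it suffices to check that the right-hand side is nonzero. Second, I would invoke Lemma \ref{nQactionp} with $n$ replaced by $n+1$: since $m \ge n+2$, the iterated Milnor operation $Q_{n+1}\cdots Q_0$ sends $x_1\cdots x_m$ to a signed sum of monomials $y_1^{j_1}\cdots y_m^{j_m}$ indexed by bijections of $\{1,\ldots,m\}$ with the multiset $\{p^{n+1},p^n,\ldots,p,1,\tfrac12,\ldots,\tfrac12\}$. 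After the identification $y_i^{1/2}=x_i$, these monomials are pairwise distinct basis elements of $\Lambda(x_1,\ldots,x_k)\otimes\F_p[y_1,\ldots,y_k]$, hence $\F_p$-linearly independent, so the sum is nonzero. An application of Lemma \ref{keylift} with this $x$ then yields simultaneously that $q_n\cdots q_0(x_1\cdots x_m)$ is nontrivial and that it does not lie in the image of $\rho^{n+1}_n$, which is precisely the conclusion of the proposition.

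The only substantive obstacle is the one absorbed into Lemma \ref{nQactionp}: namely, verifying that the alternating signs arising from the odd-degree primitivity relation $Q_n(xy) = Q_n(x)y + (-1)^{|x|}xQ_n(y)$ do not conspire to cancel the monomials produced by successive applications of $Q_0,Q_1,\ldots,Q_{n+1}$. Once that sign bookkeeping is settled, the proposition follows immediately from the general obstruction machinery of Section~2, exactly as in the $p=2$ case. In particular, no new computation specific to Brown--Peterson cohomology is required at this step; the full input from $BP\langle n\rangle$-theory has already been packaged into Lemma \ref{keylift} and diagram \eqref{Qndiag}.
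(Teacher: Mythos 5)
Your proposal is correct and follows essentially the same route as the paper: push $x_1\cdots x_m$ through diagram \eqref{Qndiag}, use Lemma \ref{nQactionp} (with $n$ replaced by $n+1$) to see that $Q_{n+1}\cdots Q_0(x_1\cdots x_m)\ne 0$ when $m\ge n+2$, and then conclude by Lemma \ref{keylift}.
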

\begin{proof}
By diagram \eqref{Qndiag}, we know  
\[
\rho^{n+1}_{-1}q_{n+1}(q_{n}\cdots q_0(x_1\cdots x_m)) = \pm Q_{n+1}\cdots Q_0(x_1\cdots x_m)  \in H^{m+2w(n+1)-n-2}(G_k; \F_p).
\] 
By Lemma \ref{nQactionp}, $Q_{n+1}\cdots Q_0(x_1\cdots x_m)$ is nontrivial if $m\ge n+2$. 
The assertion then follows from Lemma \ref{keylift}. 
\end{proof}


By \cite[Corollary 7.10]{powell}, we know that any element in $BP\langle n \rangle^*(G_k)$ which is not in the image of the map 
$BP^*(G_k) \to BP\langle n \rangle^*(G_k)$ 
is in the image of the map 
\[
q_{n}\cdots q_0 \colon H^*(G_k; \F_p) \to BP\langle n \rangle^{*+2w(n)-n-1}(G_k).
\]
The point of Propositions \ref{p=2nt} and \ref{pnt} is that we specify concrete nontrivial elements in this image that we can use for our application in the next section. 


\section{Non-algebraic classes in $BP\langle n \rangle$-cohomology}

Let $X$ be a smooth projective complex algebraic variety, and let $H_{\Mh}^{i,j}(X; R)$ denote its motivic cohomology with coefficients in a ring $R$. 
For every $i$ and $j$, the topological realization functor $X \mapsto X(\C)$ induces a natural homomorphism 
\begin{align}\label{Hmap}
H_{\Mh}^{i,j}(X; R) \to H^i(X; R) 
\end{align}
to the singular cohomology of the space $X(\C)$ of complex points of $X$.

For a prime $p$ and integer $n\ge 0$, let $BP\langle n \rangle_{\Mh}^{i,j}(X)$ be the motivic Brown-Peterson cohomology for $p$ and $n$ constructed in \cite[\S 6.4]{hoyois} and \cite[\S 3]{ormsby}. Again, for every $i$ and $j$, the topological realization functor from the motivic to the classical stable homotopy category induces a natural homomorphism 
\begin{align}\label{BPnmap}
BP\langle n \rangle_{\Mh}^{i,j}(X) \to BP\langle n \rangle^{i}(X) 
\end{align}
to the Brown-Peterson cohomology for $p$ and $n$ of the space $X(\C)$.

Recall that in degrees $(2i,i)$, the group $H_{\Mh}^{2i,i}(X; \Z)$ is naturally isomorphic to the Chow group $CH^i(X)$ of codimension $i$ cycles on $X$. Therefore, we will denote the map \eqref{BPnmap} in degrees $(2i,i)$ by 
\[
\cl_n \colon BP\langle n \rangle_{\Mh}^{2i,i}(X) \to BP\langle n \rangle^{2i}(X)
\]
and extend this notation to the map \eqref{Hmap} for mod $p$-cohomology  
\begin{align*}
\cl_{-1} \colon H_{\Mh}^{2i,i}(X; \F_p) \to H^{2i}(X; \F_p).
\end{align*}

For every $n$ and $i$, these maps fit into a commutative diagram 
\begin{align*}
\xymatrix{
BP\langle n \rangle_{\Mh}^{2i,i}(X) \ar[d]_-{\rho^n_{-1,\Mh}} \ar[r]^-{\cl_n} & BP\langle n \rangle^{2i}(X) \ar[d]^-{\rho^n_{-1}} \\
H_{\Mh}^{2i,i}(X; \F_p) \ar[r]_-{\cl_{-1}} & H^{2i}(X; \F_p).}
\end{align*}


\begin{lemma}\label{ltm}
Let $n \ge 0$ and $X$ be a smooth projective complex variety. Let $b_n$ be an element in $BP\langle n \rangle^{*}(X)$ which is not contained in the image of 
\[
\rho^{n+1}_n \colon BP\langle n+1 \rangle^{*}(X) \to BP\langle n \rangle^{*}(X)
\]
and has nontrivial image under $\rho^n_{-1}$ in $H^*(X; \F_p)$. 
Then $b_n$ is not contained in the image of $\cl_n$.  
\end{lemma}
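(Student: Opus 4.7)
The plan is to argue by contradiction. Suppose $b_n = \cl_n(\tilde b_n)$ for some motivic class $\tilde b_n \in BP\langle n\rangle_{\Mh}^{2i,i}(X)$ with $2i = |b_n|$. I will produce a motivic lift $\tilde b_{n+1}$ of $\tilde b_n$ along $\rho^{n+1}_n$; its topological realization then provides a $BP\langle n+1\rangle$-preimage of $b_n$, contradicting the first hypothesis.

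The motivic analogue of the cofiber sequence of Section~2,
\[
\Sigma^{2(p^{n+1}-1),\, p^{n+1}-1} BP\langle n+1\rangle_{\Mh} \xrightarrow{v_{n+1}} BP\langle n+1\rangle_{\Mh} \xrightarrow{\rho^{n+1}_n} BP\langle n\rangle_{\Mh} \xrightarrow{q_{n+1,\Mh}} \Sigma^{2p^{n+1}-1,\, p^{n+1}-1} BP\langle n+1\rangle_{\Mh},
\]
induces an exact sequence in which the obstruction $q_{n+1,\Mh}(\tilde b_n)$ to lifting $\tilde b_n$ lies in $BP\langle n+1\rangle_{\Mh}^{2i+2p^{n+1}-1,\, i+p^{n+1}-1}(X)$. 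This bidegree $(a,b)$ satisfies $a = 2b+1$, so $a > 2b$.

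For smooth $X$, motivic cohomology $H_{\Mh}^{a,b}(X;A)$ vanishes whenever $a>2b$. This vanishing propagates to $BP\langle n+1\rangle_{\Mh}$ through the slice filtration: its slices are shifts of motivic Eilenberg-MacLane spectra (cf.\ \cite{hoyois} and \cite{ormsby}), so every $E_2$-term of the slice spectral sequence computing $BP\langle n+1\rangle_{\Mh}^{a,b}(X)$ is a motivic cohomology group concentrated in bidegrees with $a \le 2b$, a region stable under the differentials. Hence $BP\langle n+1\rangle_{\Mh}^{a,b}(X) = 0$ for $a > 2b$, forcing $q_{n+1,\Mh}(\tilde b_n) = 0$. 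Exactness then supplies $\tilde b_{n+1}\in BP\langle n+1\rangle_{\Mh}^{2i,i}(X)$ with $\rho^{n+1}_n(\tilde b_{n+1}) = \tilde b_n$, and naturality of the cycle map with respect to $\rho^{n+1}_n$ yields
\[
\rho^{n+1}_n(\cl_{n+1}(\tilde b_{n+1})) = \cl_n(\rho^{n+1}_n(\tilde b_{n+1})) = \cl_n(\tilde b_n) = b_n,
\]
the desired contradiction.

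The main obstacle is establishing the vanishing $BP\langle n+1\rangle_{\Mh}^{a,b}(X) = 0$ for $a > 2b$; the argument above reduces this to the slice-type structure of motivic Brown-Peterson cohomology combined with the bidegree vanishing of motivic cohomology on smooth varieties. Note that the second hypothesis $\rho^n_{-1}(b_n) \ne 0$ does not play a direct role in this proof; it is included because the explicit classes $b_n$ constructed in Section~3 automatically satisfy it and because it guarantees that $b_n$ is itself nonzero.
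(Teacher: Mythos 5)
Your proof takes a genuinely different route from the paper's. The paper argues by obstruction through mod $p$-cohomology: if $b_n=\cl_n(\tilde b_n)$, then $y_n:=\rho^n_{-1}(b_n)$ lies in the image of the mod $p$-cycle map $\cl_{-1}$, and by Totaro's theorem \cite[Theorem 3.1]{totaro} (cf.\ Levine--Morel) the image of $\cl_{-1}$ is contained in the image of $\rho_{-1}\colon BP^*(X)\to H^*(X;\F_p)$; this is then played off against the hypothesis that $b_n\notin\mathrm{im}(\rho^{n+1}_n)\supseteq\mathrm{im}(\rho_n)$. The paper's proof is very short once Totaro's factorization theorem is granted, and it is purely a statement about the topological cycle map. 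Your proof instead works entirely on the motivic side: you use the motivic $v_{n+1}$-cofiber sequence to produce an obstruction to lifting $\tilde b_n$, and then show that obstruction lives in a bidegree $(a,b)$ with $a>2b$ where $BP\langle n+1\rangle_{\Mh}^{a,b}(X)$ vanishes, so that the lift always exists motivically. This proves, in effect, that $\mathrm{im}(\cl_n)\subseteq\mathrm{im}(\rho^{n+1}_n)$, a cleaner containment than the paper records (and, as you note, it renders the second hypothesis superfluous). What each buys: the paper's argument is elementary modulo a deep black box (Totaro), while yours stays inside the motivic category at the cost of more infrastructure (the motivic cofiber sequence and a vanishing theorem).

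The genuine work left in your approach is the vanishing $BP\langle n+1\rangle_{\Mh}^{a,b}(X)=0$ for $a>2b$, and your sketch leaves real gaps here. First, you should confirm that the motivic cofiber sequence $\Sigma^{2(p^{n+1}-1),p^{n+1}-1}BP\langle n+1\rangle_{\Mh}\xrightarrow{v_{n+1}}BP\langle n+1\rangle_{\Mh}\to BP\langle n\rangle_{\Mh}$ actually exists in the form you use; this does follow from Hoyois' regular-quotient construction of the $BP\langle n\rangle_{\Mh}$ as successive cofibers of $v_i$-multiplications on $MGL_{(p)}$, but it needs to be said. Second, and more seriously, the slice spectral sequence argument requires convergence for $BP\langle n+1\rangle_{\Mh}$ evaluated on the (rather large) Godeaux--Serre varieties in question, and $BP\langle n+1\rangle_{\Mh}$ is an infinite regular quotient of $MGL_{(p)}$, so one must control a potential $\lim^1$-term or argue via a finite intermediate quotient. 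A cleaner route to the vanishing, avoiding the slice spectral sequence altogether, is the long exact sequence: if $E^{a,b}(X)=0$ for $a>2b$ and $r\colon\Sigma^{2d,d}E\to E$ is any map, then the cofiber $E/r$ inherits the same vanishing, since both $E^{a,b}(X)$ and $E^{a-2d+1,b-d}(X)$ vanish when $a>2b$; starting from the Hopkins--Morel--Hoyois vanishing for $H\Z_{\Mh}$ (equivalently, higher Chow groups $CH^q(X,m)=0$ for $m<0$) and climbing back up the tower still requires handling the infinite quotient, so some care with (co)limits is unavoidable. In short: the strategy is sound and genuinely different from the paper's, but the key vanishing lemma deserves a full proof rather than a sketch.
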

\begin{proof}
Let $y_n \in H^*(X; \F_p)$ be the image of $b_n$ under $\rho^n_{-1}$. If $b_n$ was in the image of $\cl_n$, then $y_n$ was in the image of $\cl_{-1}$ as well. But, by the work of Totaro \cite[Theorem 3.1]{totaro} (see also Levine-Morel \cite[Theorem 1.2.19]{lm}),  
the mod $p$-cycle map $\cl_{-1}$ factors through the natural map
\begin{align*}
BP^*(X)\otimes_{BP^*}\Z/p \to H^*(X; \F_p)
\end{align*}
induced by $\rho_{-1} \colon BP^*(X) \to H^*(X; \F_p)$. Hence $y_n$ would have to be contained in the image of $\rho_{-1}$.  
But this is impossible by our assumption that $b_n$ is not contained in the image of $\rho^{n+1}_n$ and hence not in the image of 
\[
\rho_n \colon BP^*(X) \to BP\langle n \rangle^{*}(X).
\]
\end{proof}

We can now extend the argument of Atiyah-Hirzebruch to all finite levels in the Brown-Peterson tower and prove our main result. 

\begin{theorem}\label{main}
For every prime $p$ and every integer $n \ge 0$, there is a smooth projective complex algebraic variety $X$ and an element $b_n$ in $BP\langle n \rangle^{2w(n)+2}(X)$ which is not contained in the image of the natural map 
\begin{align*}
\cl_n \colon BP\langle n \rangle_{\Mh}^{2w(n)+2, w(n)+1}(X) \to BP\langle n \rangle^{2w(n)+2}(X).
\end{align*}
\end{theorem}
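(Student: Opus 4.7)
The plan is to combine the non-liftability of the explicit classes in $BP\langle n \rangle^*(BG_{n+3})$ constructed in Section 3 with a Godeaux-Serre variety, and then apply Lemma \ref{ltm}. Set $G := G_{n+3} = (\Z/p)^{n+3}$, and let $x_1, \ldots, x_{n+3}$ denote the standard degree-one generators of $H^*(BG; \F_p)$ (cf. \eqref{gcohomology} in the odd case). By Proposition \ref{pnt} for $p$ odd, or Proposition \ref{p=2nt} for $p=2$, applied with $k = m = n+3$, the element
\[
\tilde b_n := q_n \cdots q_0(x_1 \cdots x_{n+3}) \in BP\langle n \rangle^{2w(n)+2}(BG)
\]
is nonzero, is not in the image of $\rho^{n+1}_n$, and has nonzero mod-$p$ reduction $\rho^n_{-1}(\tilde b_n) = \pm Q_n \cdots Q_0(x_1 \cdots x_{n+3})$ by diagram \eqref{Qndiag} together with Lemma \ref{nQaction} or Lemma \ref{nQactionp}. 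The target degree is correct because $(n+3) + 2w(n) - n - 1 = 2w(n) + 2$, which for $p=2$ specializes to $2^{n+2}$.

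Next, I would invoke the Godeaux-Serre construction to obtain a smooth projective complex variety $X$ together with a classifying map $f \colon X(\C) \to BG$ which is an $N$-equivalence for some $N \gg 2w(n)+2$. Explicitly, one starts with a faithful linear representation $V$ of $G$ and forms a smooth projective $G$-variety $Y$ of large dimension (for instance, a complete intersection of smooth $G$-invariant hypersurfaces in $\Pro(V^{\oplus r})$ for $r$ sufficiently large) on which $G$ acts freely, and sets $X := Y/G$. For $r$ large enough, the connectivity of $f$ exceeds $2w(n)+2$, so $f^*$ induces an isomorphism on every spectrum-represented cohomology theory in degrees $\le 2w(n)+2$. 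In particular, $f^*$ is simultaneously an isomorphism on $BP\langle n \rangle^*$, $BP\langle n+1 \rangle^*$ and $H^*(-;\F_p)$ in the relevant range.

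Define $b_n := f^*(\tilde b_n) \in BP\langle n \rangle^{2w(n)+2}(X)$. By naturality of the transformations $\rho^{n+1}_n$ and $\rho^n_{-1}$ combined with the isomorphism property of $f^*$ in the relevant degrees, $b_n$ is not in the image of $\rho^{n+1}_n \colon BP\langle n+1 \rangle^{2w(n)+2}(X) \to BP\langle n \rangle^{2w(n)+2}(X)$, and $\rho^n_{-1}(b_n) \in H^{2w(n)+2}(X;\F_p)$ is nonzero. Lemma \ref{ltm} then implies that $b_n$ is not in the image of
\[
\cl_n \colon BP\langle n \rangle_{\Mh}^{2w(n)+2, w(n)+1}(X) \to BP\langle n \rangle^{2w(n)+2}(X),
\]
which gives the theorem.

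The main subtlety is the Godeaux-Serre step: one must know that $BG$ can be approximated by a smooth projective complex variety in such a way that the classifying map induces isomorphisms not merely on ordinary cohomology but simultaneously on the generalized theories $BP\langle n \rangle^*$ and $BP\langle n+1 \rangle^*$ in the required degree. This is standard, because a highly connected map of pointed spaces induces isomorphisms on any spectrum-represented cohomology theory in a range determined by the connectivity, and the connectivity of the map produced by Godeaux-Serre can be made arbitrarily large by enlarging the approximating projective $G$-variety. Granted this, the passage from $BG$ to $X$ is purely formal, and the theorem reduces to Lemma \ref{ltm} applied to the pulled-back class $b_n$.
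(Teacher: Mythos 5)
Your $b_n$ is the same class as the paper's, but the step where you transport non-liftability from $BG$ to $X$ rests on a false principle. You claim that a highly connected map induces isomorphisms on \emph{any} spectrum-represented cohomology theory in a range determined by the connectivity, and in particular on $BP\langle n\rangle^*$ and $BP\langle n+1\rangle^*$ up to degree $2w(n)+2$. Connectivity controls \emph{homology} in a range, not generalized cohomology: for instance $S^{2N}\to *$ is $2N$-connected, yet $KU^0(*)=\Z$ while $KU^0(S^{2N})=\Z\oplus\Z$, because $\widetilde{KU}^0(S^{2N})=\pi_{2N}KU$. The same failure occurs for $BP\langle n\rangle$ with $n\ge 1$: its coefficient ring $\Z_{(p)}[v_1,\dots,v_n]$ is nonzero in arbitrarily high degrees, so the Atiyah--Hirzebruch spectral sequence feeds $H^s\bigl(-;\pi_{s-i}BP\langle n\rangle\bigr)$ into $BP\langle n\rangle^i(-)$ for arbitrarily large $s$, and these terms are invisible to the connectivity of $f$. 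In particular $f^*\colon BP\langle n+1\rangle^{2w(n)+2}(BG)\to BP\langle n+1\rangle^{2w(n)+2}(X)$ need not be surjective, which is precisely what you would need to conclude that $b_n=f^*(\tilde b_n)$ cannot be hit by $\rho^{n+1}_n$ on $X$: a class in $BP\langle n+1\rangle^{2w(n)+2}(X)$ that is not pulled back from $BG$ could map to $b_n$ under $\rho^{n+1}_n$.

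The paper sidesteps this by applying Lemma \ref{keylift} \emph{directly on $X$} to $\varphi^*(x_1\cdots x_{n+3})\in H^{n+3}(X;\F_p)$. The only input is that $Q_{n+1}\cdots Q_0\bigl(\varphi^*(x_1\cdots x_{n+3})\bigr)=\varphi^*\bigl(Q_{n+1}\cdots Q_0(x_1\cdots x_{n+3})\bigr)$ is nonzero in $H^{2w(n+1)+1}(X;\F_p)$, and this \emph{does} follow from the $k$-connectedness of the Godeaux--Serre map with $k=2w(n+1)+1$: since $H\F_p$ has homotopy concentrated in degree $0$, connectivity gives injectivity of $\varphi^*$ on ordinary mod-$p$ cohomology in degrees $\le k$. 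So you should replace the ``transport through $BP\langle n+1\rangle^*$'' step by this direct application of Lemma \ref{keylift} on $X$. Note also that the connectivity must reach degree $2w(n+1)+1$ (where $Q_{n+1}\cdots Q_0$ of a degree-$(n+3)$ class lives), not merely $2w(n)+2$; your ``$N\gg 2w(n)+2$'' happens to cover it, but $k=2w(n+1)+1$ is the sharp bound, and it is this step, not the $BP\langle n+1\rangle$-isomorphism, that forces the dimension of $X$.
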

\begin{proof}
Let $G_{n+3} = (\Z/p)^{n+3}$. Let $k$ denote $2w(n+1)+1$.   
By \cite[\S 20]{serre} and \cite[Proposition 6.6]{ah}, there is a smooth projective variety $X$ of complex dimension $k$ together with a continuous map $X \to BG_{n+3} \times K(\Z, 2)$ which is $k$-connected. 
Let 
\[
\varphi \colon X \to BG_{n+3} \times K(\Z, 2) \to BG_{n+3}
\] 
denote the composition with the projection onto $BG_{n+3}$. For all $i \le k$, any nonzero element $x\in H^i(G_{n+3}; \F_p)$ is pulled back to a nonzero element $\varphi^*(x) \in H^i(X; \F_p)$.  

Let $x_1,\ldots, x_{n+3}$ denote the distinct exterior algebra generators of $H^*(G_{n+3}; \F_p)$ for $p$ odd, or the distinct polynomial generators for $p=2$. By Lemmas \ref{nQaction} and \ref{nQactionp}, we know that the element $Q_n\cdots Q_0(x_1 \cdots x_{n+3})$ is nontrivial in $H^{2w(n)+2}(G_{n+3};\F_p)$. Thus, since $2w(n)+2 \le k$, the element 
\[
y_n:= Q_n\cdots Q_0(\varphi^*(x_1 \cdots x_{n+3})) = \varphi^*Q_n\cdots Q_0(x_1 \cdots x_{n+3}) \in H^{2w(n)+2}(X;\F_p)
\]
is nontrivial as well. 
Now we define 
\[
b_n:= q_n\cdots q_0(\varphi^*(x_1\cdots x_{n+3})) = \varphi^*q_n\cdots q_0(x_1\cdots x_{n+3}) \in BP\langle n \rangle^{2w(n)+2}(X).
\]
Since its image under the canonical map
\[
BP\langle n \rangle^{2w(n)+2}(X) \to H^{2w(n)+2}(X; \F_p)
\]
is $\pm y_n$, $b_n$ is nontrivial as well. 
Hence, by Lemma \ref{keylift}, $b_n$ is not contained in the image of $\rho^{n+1}_n \colon BP\langle n+1 \rangle^{*}(X) \to BP\langle n \rangle^{*}(X)$. 
By Lemma \ref{ltm} this implies that $b_n$ is not contained in the image of $\cl_n$. 
%
\end{proof}

\begin{remark}
The minimal possible (complex) dimension of the variety in Theorem \ref{main} is 
$2w(n+1)+1 = 2(p^{n+1} + \cdots + 1) +1$. 
This is because we need the map $X \to BG_{n+3} \times K(\Z, 2)$ to be $2w(n+1)+1$-connected for the argument to work. Hence, for $p=2$, we see that the least possible dimension of $X$ is $2^{n+3}-1$. 
The fact that the dimension is rather big is consistent with a result of Soul\'e-Voisin \cite[Theorem 1]{sv} that the order of an element detected by the Atiyah-Hirzebruch-Totaro obstruction must be small relative to the dimension of the variety.   
\end{remark}

\begin{remark}
The case $n=0$ of Theorem \ref{main} is the original example of Atiyah and Hirzebruch \cite[p. 42, Proof of (6.7)]{ah}. Let $G_3=\Z/p\times \Z/p\times \Z/p$ with cohomology ring $H^*(G_3; \F_p) = \Lambda(x_1, x_2, x_3)\otimes \F_p[y_1, y_2, y_3]$. 
Atiyah and Hirzebruch consider the element $y:=\beta(x_1x_2x_3) \in H^4(G_3; \F_p)$, where $\beta$ denotes the Bockstein homomorphism, and show $Q_1(y) \ne 0$ in $H^{4+2p-1}(G_3; \F_p)$. Since $y$ is in the kernel of $\beta$, it corresponds to a unique $p$-torsion element in $H^*(G_3; \Z)$. The Godeaux-Serre construction then provides a smooth projective variety $X$ whose cohomology contains $H^*(G_3; \Z)$ as a direct factor up to degree $\dim_{\C}X$. Then they show that a class in the image of $\cl$ must be a permanent cycle in the Atiyah-Hirzebruch spectral sequence $H^*(X; \Z) \Rightarrow KU^*(X)$ converging to the complex $K$-theory of $X$. The operation $Q_1$ corresponds to a differential in this spectral sequence. Hence if $Q_1$ acts non-trivially on $y$, then the lift of $y$ to $H^*(X; \Z)$ cannot be a permanent cycle. 
\end{remark}

\begin{remark}
The elements constructed in Theorem \ref{main} map to torsion classes in the $\Z_{(p)}$-cohomology of $X(\C)$. In particular, their images in $H^*(X; \Z_{(p)})$ are $\Z_{(p)}$-Hodge classes. 
\end{remark}

\begin{remark}\label{remkollar}
Let $\Omega^*(-)$ denote the algebraic cobordism theory of Levine and Morel \cite{lm}. It is constructed as the universal oriented cohomology theory for smooth algebraic varieties over fields of characteristic $0$. For smooth varieties over $\C$, it comes equipped with a natural commutative diagram 
\begin{align}\label{cyclediag}
\xymatrix{
\Omega^*(X) \ar[d]_-{\theta} \ar[r]^-{\cl_{\Omega}} & MU^{2*}(X) \ar[d]^-{\vartheta} \\
CH^*(X) \ar[r]_-{\cl} & H^{2*}(X; \Z).}
\end{align}
One might wonder what the image of $\cl_{\Omega}$ might be. A first restriction is given by diagram \eqref{cyclediag} itself. For the image of $\cl_{\Omega}$ has to be contained in the subgroup $\Hdg^{2*}_{MU}(X)$ of elements in $MU^{2*}(X)$ which are mapped to Hodge classes under $\vartheta$. But, as Claire Voisin kindly pointed out to us, one can say more. As in Koll\'ar's example, let $X$ be a very general smooth hypersurface in $\Pro^4$ of degree divisible by $s^3$ for an integer $s$ coprime to $6$. Then the map $\vartheta$ is surjective and all integral cohomology classes in $H^4(X; \Z)$ are Hodge classes. Thus, since there is a class $\alpha \in H^4(X; \Z)$ which is not contained in the image of $\cl$, there is an element in $MU^4(X)$ mapping to $\alpha$ which is not contained in the image of $\cl_{\Omega}$. 

\end{remark}

\begin{remark}\label{remtate}
In \cite{cts}, Colliot-Th\'el\`ene and Szamuely use the examples of \cite{ah} to show that the $\ell$-adic integral version of the Tate conjecture for varieties over finite fields of characteristic $\ne \ell$ fails (in \cite{etalecycles}, one can find a different explanation of the obstructions). More recently, it was shown by Pirutka-Yagita \cite{py} for the primes $\ell=2$, $3$ or $5$, and by Kameko \cite{kameko} for all primes $\ell$, that the integral version of the Tate conjecture even fails for non-torsion classes. Those examples are based on the examples by Atiyah-Hirzebruch and approximations of projective varieties by classifying spaces of affine algebraic groups. 
In \cite{antieau}, Antieau shows that there is another type of non-torsion examples which come from classifying spaces of quotients of special linear groups. The argument uses representation theory, while the $Q_i$ vanish on those examples. However, there are differentials of higher degree in the Atiyah-Hirzebruch spectral sequence that do detect the examples. 
\end{remark}

\begin{remark}
By \cite[\S 20]{serre}, Godeaux-Serre varieties can be constructed over any infinite field. In particular, we could use the techniques of \cite{etalecycles} and consider a smooth projective variety $X$ over an algebraically closed field $\bar{\F}_{\ell}$ of characteristic $\ell \ne p$. There is a $p$-completed \'etale version of Brown-Peterson cohomology for $p$ and $n$, denoted by $\hat{BP}\langle n \rangle_{\et}^{*}(X)$. Moreover, the results of \cite{hoyois} allow to construct a motivic version $BP\langle n \rangle_{\Mh}^{2*,*}(X)$ in characteristic $\ell \ne p$. 
The stable $p$-completed \'etale realization functor then induces a natural map 
\begin{align*}
BP\langle n \rangle_{\Mh}^{2*,*}(X)\otimes_{\Z} \Z_{p} \to \hat{BP}\langle n \rangle_{\et}^{2*}(X). 
\end{align*}
Then one can use Milnor operations in \'etale cohomology and repeat our argument for Godeaux-Serre varieties defined over $\bar{\F}_{\ell}$. This yields non-algebraic classes in the \'etale Brown-Peterson tower in characteristic $\ell \ne p$.  
\end{remark}

%
%
%
%
\bibliographystyle{amsalpha}

\end{document}